\definecolor{lightgray}{rgb}{0.9, 0.9, 0.9}
\definecolor{darkgray}{rgb}{0.7, 0.7, 0.7}
\definecolor{darkblue}{rgb}{0, 0, .4}
\newtheorem{theorem}{Theorem}
\newtheorem{proposition}[theorem]{Proposition}
\newtheorem{lemma}[theorem]{Lemma}
\newcommand{\bprop}{\begin{proposition}}
\newcommand{\eprop}{\end{proposition}}
\newcommand{\minisec}[1]{\bigskip\noindent{\bf #1.}}
\newfont{\footsc}{cmcsc10 at 8truept}
\newfont{\footbf}{cmbx10 at 8truept}
\newfont{\footrm}{cmr10 at 10truept}
\renewenvironment{abstract}%
		{
		  \begin{list}{}%
		     {\setlength{\rightmargin}{1in}%
		      \setlength{\leftmargin}{1in}}%
		   \item[]\ignorespaces\begin{small}}%
		 {\end{small}\unskip\end{list}}
\keywords{indecomposable matching, pin sequence}
\title{\sc A Ramsey Theorem for Indecomposable Matchings}
\author{%
James Fairbanks\\
\small University of Florida\\[-0.4ex]
\small Gainesville, Florida USA\\[-1.5ex]
}
\date{}
\begin{document}
\maketitle

\pagestyle{main}

\begin{abstract}
A matching is indecomposable if it does not contain a nontrivial contiguous segment of vertices whose neighbors are entirely contained in the segment.  We prove a Ramsey-like result for indecomposable matchings, showing that every sufficiently long indecomposable matching contains a long indecomposable matching of one of three types: interleavings, broken nestings, and proper pin sequences.
\end{abstract}

\section{Introduction}

A (labeled, complete) \emph{matching} is a graph on the vertex set
$[2n]=\{1,2,\dots,2n\}$ in which every vertex is incident to exactly one
edge.  An \emph{interval} in a matching is a contiguous segment
of vertices $[i,j]=\{i,i+1,\dots,j\}$ such that no vertex in $[i,j]$
is adjacent to a vertex outside $[i,j]$.  Every matching
has two \emph{trivial} intervals: the empty set and the set of all its
vertices.  A  matching is said to be \emph{indecomposable} if
it has no other intervals (and \emph{decomposable} if it does have nontrivial intervals, see Figure~1).  We prove a Ramsey-like result for
indecomposable matchings, showing that every such matching
contains a large member of one of three explicit families of
indecomposable matchings.

%This figure shows a decomposable matching and labels its non trivial interval
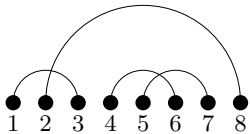
\begin{figure}[t]
\begin{footnotesize}
\begin{center}
\begin{tabular}{c}
	\psset{xunit=0.17in, yunit=0.17in}
	\psset{linewidth=0.005in}
	\begin{pspicture}(1,0)(8,4)
		\pscircle*(1,1){0.04in}\uput[270](1,1){$1$}
		\pscircle*(2,1){0.04in}\uput[270](2,1){$2$}
		\pscircle*(3,1){0.04in}\uput[270](3,1){$3$}
		\pscircle*(4,1){0.04in}\uput[270](4,1){$4$}
		\pscircle*(5,1){0.04in}\uput[270](5,1){$5$}
		\pscircle*(6,1){0.04in}\uput[270](6,1){$6$}
		\pscircle*(7,1){0.04in}\uput[270](7,1){$7$}
		\pscircle*(8,1){0.04in}\uput[270](8,1){$8$}
		\psarc(2,1){1\psxunit}{0}{180}
		\psarc(5,1){3\psxunit}{0}{180}
		\psarc(5,1){1\psxunit}{0}{180}
		\psarc(6,1){1\psxunit}{0}{180}
	\end{pspicture}
\\[8pt]
\end{tabular}
\caption{A decomposable matching with the nontrivial interval $[4,7]$}
\label{fig-dec-matchings}
\end{center}
\end{footnotesize}
\end{figure}

Indecomposable matchings have been studied by Nijenhuis and 
Wilf~\cite{nijenhuis:the-enumeration:}, who provided a recursive
algorithm for constructing all indecomposable matchings.
From their recursion, it follows that the number, $s_n$, of 
indecomposable matchings of $[2n]$ satisfies the recurrence

$$
s_n=(n-1)\sum_{i=1}^{n-1}s_is_{n-k}.
$$

The contribution of this paper is to show that there are, essentially, only three types of indecomposable matchings.

\begin{theorem}
\label{thm-ind-match-ramsey} 
Every indecomposable matching with at least $(2k)^{2k}$ edges contains
a broken nesting, interleaving or proper pin sequence with k edges.
\end{theorem}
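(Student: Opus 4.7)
The plan is to adapt the trichotomy argument of Brignall, Huczynska, and Vatter for simple permutations to the matching setting. The bound $(2k)^{2k}$ has the shape of an iterated pigeonhole over roughly $2k$ stages, each branching over a bounded number of structural types for a single edge; this is the characteristic signature of the pin-sequence approach.

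I would begin with a greedy construction of a pin sequence. Fix a starting edge $e_1$, and at stage $i$ maintain the \emph{window} $W_i$ defined as the smallest interval $[a_i,b_i]$ containing the endpoints of $e_1,\ldots,e_i$. Indecomposability of the ambient matching guarantees that, as long as $W_i \neq [2n]$, some edge has exactly one endpoint inside $W_i$; call such an edge a \emph{pin} at stage $i$. A proper pin is one whose external endpoint is extremal in a prescribed direction and which separates an earlier pin from $W_{i-1}$, so that each new pin is uniquely forced by the previous ones. If this process extends for $k$ stages we obtain a proper pin sequence of length $k$ and are done.

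The remaining work is to analyze when the extension fails. At each stage the available pins split into a constant number of types (external endpoint on the left vs.\ right, arc passing over vs.\ under the current window, and so on). If the greedy process stalls at some stage because no proper pin exists, a pigeonhole within the offending type yields a large family of edges that — when considered on their own endpoints — form either an \emph{interleaving} of parallel chords or a \emph{broken nesting} of concentric arcs with one endpoint displaced. Composing the stage-wise branching over roughly $2k$ rounds against a type alphabet of size about $2k$ gives the claimed bound $(2k)^{2k}$.

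The principal obstacle will be the case analysis verifying that these three outcomes genuinely exhaust the possibilities, together with the argument that an extracted subfamily of edges forms, \emph{on its own vertices}, an indecomposable matching of the claimed type rather than merely an indecomposable subpattern inherited from the ambient matching. Reconciling the local pin structure with the global indecomposability hypothesis — in particular handling simultaneous pins that approach $W_i$ from opposite sides, and ruling out interaction between the pin sequence and the parallel/concentric substructures it fails to extend into — is where the combinatorial heart of the argument lies.
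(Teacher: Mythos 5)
Your outline has the right ingredients in the right places --- pin sequences driven by indecomposability on one side, and an extraction of interleavings/nestings from a large family of mutually relevant edges on the other --- but the mechanism that actually closes the argument is missing, and the dichotomy you propose is not the one that works. First, the extraction step: what you call ``a pigeonhole within the offending type'' is really an Erd\H{o}s--Szekeres argument. The paper's Lemma~\ref{conval} takes a single edge $e$ crossed by $2(k-1)^2+2$ edges, restricts to the $(k-1)^2+1$ crossing from one side, orders them by left endpoint, and applies Erd\H{o}s--Szekeres to the sequence of right endpoints: an increasing subsequence gives an interleaving, a decreasing one gives a nesting broken by $e$. Plain pigeonhole does not produce the monotone structure you need, and your worry about whether the extracted subfamily is indecomposable ``on its own vertices'' evaporates once you see that the witnessing configurations are exactly one distinguished edge together with $k-1$ parallel or concentric chords crossing it.

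Second, and more seriously, the greedy pin process in an indecomposable matching never ``stalls'': as long as the window is a proper subinterval, indecomposability supplies a splitting pin, so the process ends only when the window is everything --- possibly after very few steps --- and a single short pin sequence tells you nothing about the size of $M$. The paper's actual argument is a global count. One shows (Lemma~\ref{RRPPS}) that \emph{every} edge begins a proper \emph{right-reaching} pin sequence (one ending at vertex $2n$), obtained by running the greedy process to the right and then thinning it to a proper sequence by always taking the latest-indexed pin crossing the previous one. All such sequences are then organized into a tree via the parent map $p_1,\dots,p_m \mapsto p_2,\dots,p_m$, rooted at the edge covering $2n$. Absence of a proper pin sequence with $k$ edges bounds the height by $k-1$; Lemma~\ref{conval} bounds the number of children of any node by $2(k-1)^2+1$, since distinct children prepend distinct edges crossing the same pin. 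Hence the tree has at most $\sum_{i=0}^{k-1}(2(k-1)^2+1)^i$ nodes, and since each edge of $M$ is the first pin of some node, $M$ has at most that many edges. Your ``composing the stage-wise branching over roughly $2k$ rounds'' gestures at such a product bound, but without identifying what object is being counted and why the edge count of $M$ injects into it, the argument does not close.
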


This result is the matching analogue of the results of Brignall, Huczynska, and Vatter~\cite{brignall:decomposing-sim:}, who proved a similar result for permutations.

In Theorem~\ref{thm-ind-match-ramsey}, we say that the matching $M$ contains the matching $N$ if $N$ can be obtained from $M$ by deleting a collection of edges and the vertices incident with those edges, and then relabeling the remaining vertices.  For the remainder of this section we discuss the three types of indecomposable matchings mentioned in Theorem~\ref{thm-ind-match-ramsey}.  The proof of the theorem follows in the next section.

%Interleaving
% Main Figure
\begin{figure}
\begin{footnotesize}
\begin{center}
\begin{tabular}{ccccccc}
	\psset{xunit=0.17in, yunit=0.17in}
	\psset{linewidth=0.005in}
	\begin{pspicture}(1,0)(8,4)
		\pscircle*(1,1){0.04in}\uput[270](1,1){$1$}
		\pscircle*(2,1){0.04in}\uput[270](2,1){$2$}
		\pscircle*(3,1){0.04in}\uput[270](3,1){$3$}
		\pscircle*(4,1){0.04in}\uput[270](4,1){$4$}
		\pscircle*(5,1){0.04in}\uput[270](5,1){$5$}
		\pscircle*(6,1){0.04in}\uput[270](6,1){$6$}
		\pscircle*(7,1){0.04in}\uput[270](7,1){$7$}
		\pscircle*(8,1){0.04in}\uput[270](8,1){$8$}
		\psarc(3,1){2\psxunit}{0}{180}
		\psarc(4,1){2\psxunit}{0}{180}
		\psarc(5,1){2\psxunit}{0}{180}
		\psarc(6,1){2\psxunit}{0}{180}
	\end{pspicture}
&\rule{0pt}{0.5in}&
	\psset{xunit=0.17in, yunit=0.17in}
	\psset{linewidth=0.005in}
	\begin{pspicture}(1,0)(8,4)
		\pscircle*(1,1){0.04in}\uput[270](1,1){$1$}
		\pscircle*(2,1){0.04in}\uput[270](2,1){$2$}
		\pscircle*(3,1){0.04in}\uput[270](3,1){$3$}
		\pscircle*(4,1){0.04in}\uput[270](4,1){$4$}
		\pscircle*(5,1){0.04in}\uput[270](5,1){$5$}
		\pscircle*(6,1){0.04in}\uput[270](6,1){$6$}
		\pscircle*(7,1){0.04in}\uput[270](7,1){$7$}
		\pscircle*(8,1){0.04in}\uput[270](8,1){$8$}
		\psarc(4,1){3\psxunit}{0}{180}
		\psarc(4,1){2\psxunit}{0}{180}
		\psarc(4,1){1\psxunit}{0}{180}
		\psarc(6,1){2\psxunit}{0}{180}
	\end{pspicture}
&\rule{0pt}{0.5in}&
	\psset{xunit=0.17in, yunit=0.17in}
	\psset{linewidth=0.005in}
	\begin{pspicture}(1,0)(8,4)
		\pscircle*(1,1){0.04in}\uput[270](1,1){$1$}
		\pscircle*(2,1){0.04in}\uput[270](2,1){$2$}
		\pscircle*(3,1){0.04in}\uput[270](3,1){$3$}
		\pscircle*(4,1){0.04in}\uput[270](4,1){$4$}
		\pscircle*(5,1){0.04in}\uput[270](5,1){$5$}
		\pscircle*(6,1){0.04in}\uput[270](6,1){$6$}
		\pscircle*(7,1){0.04in}\uput[270](7,1){$7$}
		\pscircle*(8,1){0.04in}\uput[270](8,1){$8$}
		\psarc(5,1){3\psxunit}{0}{180}
		\psarc(5,1){2\psxunit}{0}{180}
		\psarc(5,1){1\psxunit}{0}{180}
		\psarc(3,1){2\psxunit}{0}{180}
	\end{pspicture}
&\rule{0pt}{0.5in}&
	\psset{xunit=0.17in, yunit=0.17in}
	\psset{linewidth=0.005in}
	\begin{pspicture}(1,0)(8,4)
		\pscircle*(1,1){0.04in}\uput[270](1,1){$1$}
		\pscircle*(2,1){0.04in}\uput[270](2,1){$2$}
		\pscircle*(3,1){0.04in}\uput[270](3,1){$3$}
		\pscircle*(4,1){0.04in}\uput[270](4,1){$4$}
		\pscircle*(5,1){0.04in}\uput[270](5,1){$5$}
		\pscircle*(6,1){0.04in}\uput[270](6,1){$6$}
		\pscircle*(7,1){0.04in}\uput[270](7,1){$7$}
		\pscircle*(8,1){0.04in}\uput[270](8,1){$8$}
		\psarc(4,1){1\psxunit}{0}{180}
		\psarc(5.5,1){1.5\psxunit}{0}{180}
		\psarc(3.5,1){2.5\psxunit}{0}{180}
		\psarc(5,1){3\psxunit}{0}{180}
		% label positions are calculated by extending the circle 0.4 xunits.
		\rput(3.5, 2.3){$p_1$}
		\rput(6.6, 2.7){$p_2$}
                   \rput(1.1, 2.7){$p_3$}
		\rput(8.2, 2.7){$p_4$}
	\end{pspicture}
\end{tabular}
\end{center}
\end{footnotesize}
\caption{From left to right, the interleaving on $[8]$, the right-broken nesting on $[8]$, the left-broken nesting on $[8]$, and a proper pin sequence on $[8]$.}
\label{fig-ind-matchings}
\end{figure}
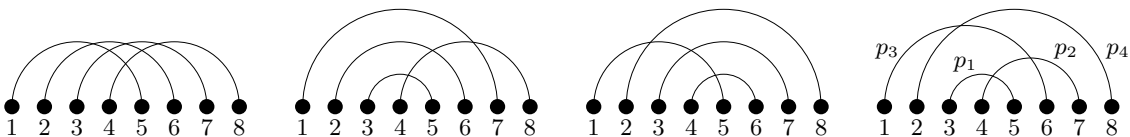

The \emph{interleaving} on $[2n]$ is the indecomposable matching defined by $i\sim i+n$ for all $i \in[n]$.  The interleaving on $[8]$ is depicted in the first matching of Figure~2.

%Broken Nesting.

The \emph{nesting} on $[2n-2]$ is the matching defined by $i\sim
 2n-2-i+1$ for $i\in [n]$.  This matching is not indecomposable, but can be made indecomposable by adding a new edge which \emph{breaks} the nesting.  This new edge can break the nesting either to the left or the right.  The \emph{right-broken nesting} on $[2n]$ has edges $n\sim 2n$ and $i\sim 2n-i$ for $i\in[n-1]$, while the \emph{left-broken nesting} on $[2n]$ has edges $1\sim n+1$ and $i+1\sim 2n-i+1$ for $i\in[n-1].$ The right- and left-broken nestings on $[8]$ are depicted in the second and third matchings of Figure~\ref{fig-ind-matchings}.

%Pin Sequence

The most diverse family of indecomposable matchings is the family of \emph{pin sequences}.  In order to define pin sequences, we need a few preliminaries.  Given a set of edges in a matching, its \emph{shadow} is the smallest contiguous segment of vertices containing their endpoints.  In an indecomposable matching, every nonempty shadow must either consist of all the vertices, or be \emph{split}, meaning that there is a vertex in the shadow which is adjacent to a vertex outside of the shadow. We refer to such edges as \emph{pins}.

A pin sequence is then a sequence of edges $p_1,p_2,\dots$ such that each $p_i$ breaks the shadow of $\{p_1,\dots,p_{i-1}\}$.   Thus one pin sequences on $[8]$ is $3\sim 5, 4\sim7, 6\sim 1, 2\sim 8$, shown in the fourth matching of Figure~\ref{fig-ind-matchings}.

%%Proof that all pin squences are indecomposable
It is important to verify that all pin sequences are indecomposable.
\begin{proposition} \label{pinSeqsAreIndecomp}
  Every pin sequence is an indecomposable matching.
\end{proposition}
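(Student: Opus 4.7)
The plan is to argue by contradiction: suppose the matching $M$ determined by the pin sequence $p_1, \ldots, p_n$ has a nontrivial interval $J$. The argument will exploit the fact that $J$, being an interval of the matching, must contain both endpoints of every edge it meets, and combine this with the growing structure of the shadows $S_i$ of $\{p_1, \ldots, p_i\}$. The key auxiliary observation I will use is that each $S_i$ is a contiguous segment whose two extreme vertices are themselves endpoints of some $p_j$ with $j \le i$; otherwise the shadow could be trimmed.

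I will then define $i_0$ to be the smallest index for which the endpoints of $p_{i_0}$ lie in $J$; this exists because $J$ is nonempty and every vertex lies in some edge, whose partner is then forced into $J$ by the interval property. I plan to split into two cases. When $i_0 = 1$, I would induct on $i$ to prove $S_i \subseteq J$: the base case follows from contiguity of $J$ together with the fact that $p_1$'s endpoints are the extremes of $S_1$; for the step, the pin $p_{i+1}$ breaks $S_i$, so its inside endpoint lies in $S_i \subseteq J$, whereupon the interval property pulls its outside endpoint into $J$, and contiguity of $J$ forces $J \supseteq S_{i+1}$. Iterating to $i = n$ gives $J \supseteq S_n$, which is the full vertex set, contradicting $J \neq V$.

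When $i_0 > 1$, the pin $p_{i_0}$ has an endpoint $y$ inside $S_{i_0 - 1}$ and an endpoint $x$ outside, both of which lie in $J$. I would assume without loss of generality that $x$ is to the right of $S_{i_0 - 1}$, and let $b$ be the rightmost vertex of $S_{i_0 - 1}$. Because $y$ is an endpoint of $p_{i_0}$ while $b$ is an endpoint of some $p_j$ with $j < i_0$, the matching property forces $y \ne b$, so $y < b < x$. Contiguity of $J$ then places $b$ in $J$, contradicting the minimality of $i_0$.

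The main subtlety I expect is avoiding a conflation between $S_{i_0-1}$ and the endpoint set of $p_1, \ldots, p_{i_0-1}$: the shadow will typically contain vertices not used by any $p_j$ with $j < i_0$, so it is not literally the union of the earlier pins. The saving grace is the observation above about shadow extremes always being endpoints of edges in the defining set, and this is exactly what allows the minimality-of-$i_0$ argument to bite in the second case.
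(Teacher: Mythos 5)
Your proof is correct, and although it shares the paper's overall strategy---assume a nontrivial interval $J$, locate the pin of smallest index whose endpoints lie in $J$, show that index must be $1$, and then propagate containment upward until $J$ is forced to be the whole vertex set---your execution is genuinely more careful and in fact repairs a gap in the paper's own argument. The paper's proof rests on the assertion that in a pin sequence $p_{i+1}$ crosses $p_i$; that holds for \emph{proper} pin sequences but not for general ones, where $p_{i+1}$ is only guaranteed to split the shadow $S_i$ of $\{p_1,\dots,p_i\}$, so its inner endpoint may land in $S_i$ far from $p_i$. You avoid this by working with the shadows directly: in the upward induction you use only that $p_{i+1}$ has one endpoint in $S_i\subseteq J$ (so contiguity of $J$ absorbs $S_{i+1}$), and in the minimality argument you use your correct auxiliary observation that the extreme vertices of $S_{i_0-1}$ are endpoints of earlier pins, so that the extreme vertex $b$ trapped between $y$ and $x$ drags an earlier pin into $J$ and contradicts the choice of $i_0$. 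The one step you leave implicit---that $b\in J$ forces both endpoints of that earlier pin $p_j$ into $J$ via the interval property before the contradiction lands---is immediate. So your argument is a complete proof of the proposition as stated, valid for all pin sequences rather than only the proper ones.
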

\begin{proof}
Let $P=p_1, p_2, \dots , p_n $ be a
  pin sequence and suppose  that $P$ has an interval. There must be at least one pin in
  any nontrivial interval and since the $P$ is finite there
  is a largest $i$ such that $p_i$ is in the interval, this is the
  last pin in the interval. Suppose $i\neq 2n$. Because $P$ is a pin
  sequence we know that $p_{i+1}$ crosses $p_i$ which violates our
  assertion that $p_{i}$ was the last pin in the interval so $p_i$
  must have been $p_{2n}$.
%%pin sequences are not pin sequences backwards but we don't need that because we just need the symmetry of crossing.
We show that every interval contains $p_1$. Suppose $p_i \neq p_1$ is the first pin an interval. Since $P$ is a pin sequence then $p_i$ crosses $p_{i-1}$ so $p_{i-1}$ is in the interval, but this contradicts the minimality of $i$. So we know that $p_1$ is in every interval.

This shows pin sequences contain only trivial intervals and thus are indecomposable.
  
  \end{proof}

In the statement of Theorem~\ref{thm-ind-match-ramsey} we use a particular type of pin sequence. A \emph{proper} pin sequence satisfies, for each $1<i<2n$, $p_{i+1}$ splits the shadow cast by $\{p_1,\dots, p_{i}\}$ but not the shadow cast by $\{p_1,\dots, p_{i-1}\}$

\section{Proof of Theorem~\ref{thm-ind-match-ramsey}}

Our proof of Theorem~\ref{thm-ind-match-ramsey} consists of analyzing two
possibilities.  First, we show that if a single edge is crossed by many
different edges, then the matching contains an interleaving or broken nesting.
The alternative is that no edge is crossed by many different edges, in which
case we show that the matching contains a long proper pin sequence.

\begin{lemma}
\label{conval}
  If a single edge $e$ is crossed by $2(k-1)^2 + 2$ edges of a matching,
then the matching contains either a broken nesting or an interleaving of length $k$.
\end{lemma}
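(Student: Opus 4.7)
The plan is to partition the crossers of $e$ by side and apply the Erd\H{o}s--Szekeres theorem. Write $e=\{a,b\}$ with $a<b$. Each edge crossing $e$ has exactly one endpoint in the open interval $(a,b)$; its other endpoint is either less than $a$ (a ``left crosser'') or greater than $b$ (a ``right crosser''). By pigeonhole at least $(k-1)^2+1$ crossers are of the same type. Without loss of generality assume these are left crossers, written as $\{\ell_i,m_i\}$ with $\ell_i<a<m_i<b$, and indexed so that $\ell_1<\ell_2<\cdots<\ell_N$ where $N=(k-1)^2+1$.

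Next I would apply the Erd\H{o}s--Szekeres theorem to the sequence $m_1,m_2,\ldots,m_N$ to extract either an increasing or a decreasing subsequence of length $k$. In the increasing case, indices $i_1<\cdots<i_k$ produce $k$ edges whose endpoints occur in the order
$$\ell_{i_1}<\cdots<\ell_{i_k}<a<m_{i_1}<\cdots<m_{i_k},$$
which, after deleting all other edges and relabeling, is exactly the interleaving on $[2k]$. In the decreasing case, the $k$ selected edges form a nested family; discarding the outermost and retaining $e$ itself produces $k$ edges whose endpoints occur in the order
$$\ell_{i_2}<\cdots<\ell_{i_k}<a<m_{i_k}<\cdots<m_{i_2}<b,$$
which I claim is the right-broken nesting on $[2k]$ under the natural relabeling.

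The main obstacle is verifying the identification in the nested case, namely that the $2k$ positions above carry the edges $i\sim 2k-i$ for $i\in[k-1]$ together with the break edge $k\sim 2k$; this is straightforward index tracking, matching $\ell_{i_{j+1}}$ to position $j$ and $m_{i_{j+1}}$ to position $2k-j$ for $j\in[k-1]$, while $a\mapsto k$ and $b\mapsto 2k$. The symmetric situation where the right crossers predominate yields, by the same argument, either an interleaving or a left-broken nesting of length $k$.
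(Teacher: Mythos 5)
Your proposal is correct and follows essentially the same route as the paper's proof: split the crossers of $e$ by side, apply pigeonhole to get $(k-1)^2+1$ on one side, order by the outer endpoints, and apply Erd\H{o}s--Szekeres to the inner endpoints, with the increasing case yielding an interleaving and the decreasing case a nesting broken by $e$. Your write-up is in fact more careful than the paper's in explicitly verifying the relabeling that identifies the decreasing case with the right-broken nesting on $[2k]$.
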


\begin{proof}%of lemma on convalescence implies broken nesting or interleaving

Every edge that crosses $e$ crosses either to the left or to the right, thus at least $(k-1)^2+1$ of the edges must cross to the same side of $e$. By symmetry call that side left. 
Now order these $(k-1)^2+1$ edges by their left endpoints, preserving the natural order on the integers. Let $S$ be the unique sequence formed by the right vertices of the the edges when read in order.

%Now construct the unique sequence formed by taking the left endpoints of the $(k-1)^{2}+1$ edges in increasing order and reading the labels off their neighbors. 

By the Erd\H{o}s-Szekeres theorem, $S$ has a monotone subsequence of length $k$. If this subsequence is increasing, the matching contains an interleaving. Otherwise this subsequence is
decreasing and the matching contains a nesting that is broken by $e$.

\end{proof} 

In order to prove the main theorem, we will need pin sequences that are \emph{right-reaching}, that is, their final pin is incident with the vertex $2n$.

%%%% we show that we have plenty of pin sequences because we are going to pigeon hole the pin sequence tree

It is helpful to know that proper right-reaching pin sequences are always
available in indecomposable matchings.
\begin{lemma} \label{RRPPS}
Every indecomposable matching has a proper right-reaching pin sequence beginning with any edge.
\end{lemma}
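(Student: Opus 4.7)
I would construct the desired sequence greedily from $p_1$, appending one proper pin at a time. Let $S_i$ denote the shadow of $\{p_1,\ldots,p_i\}$, and set $N_i = S_i \setminus S_{i-1}$ (with $N_1 := S_1$). Two simple observations reduce the task. First, truncation preserves properness, since the proper condition at each index $k$ involves only $p_1,\ldots,p_{k+1}$. Second, the shadow grows strictly and does so only by incorporating the outside endpoint of the newest pin; hence the first time $2n$ enters some shadow $S_j$ it must do so as the outside endpoint of $p_j$, so the truncation $p_1,\ldots,p_j$ is proper and right-reaching. It therefore suffices to show that as long as $2n\notin S_i$, the sequence can be extended by a proper pin.

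For the extension, indecomposability of $M$ together with $S_i\subsetneq[2n]$ guarantees that some edge $e$ splits $S_i$. For $p_{i+1}=e$ to be proper (when $i\ge 2$), its inside endpoint must lie in $N_i$. The plan is to make each $p_i$ \emph{extremal}: a pin of $S_{i-1}$ whose outside endpoint is as far from $S_{i-1}$ as possible on the appropriate side, leaving no ``longer'' pin of $S_{i-1}$ available. Under this choice, any edge splitting $S_i$ whose inside endpoint lay in $S_{i-1}$ would itself be a pin of $S_{i-1}$ whose outside endpoint is strictly further out than $p_i$'s on the same side, contradicting extremality. Hence every edge that splits $S_i$ has its inside endpoint in $N_i$, and any one of them may be taken as a proper $p_{i+1}$.

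The main obstacle is formalizing ``extremal'' so that the construction is simultaneously (i) never stuck and (ii) eventually reaches $2n$. A naive rule like ``always extend to the farthest endpoint'' can push the shadow in the wrong direction and fail to reach $2n$; conversely, ``always push toward the side containing $2n$'' can trap the sequence in a small $N_i$ with no outgoing edges. I expect the right rule is hybrid: prefer the side on which the partner $v$ of $2n$ currently lies outside $S_{i-1}$ (so that eventually $v\in N_j$ for some $j$ and then $p_{j+1}=v\sim 2n$ is a proper right-reaching final step), and otherwise choose any extremal pin on the remaining side. A case analysis on the two sides of $S_{i-1}$, together with the indecomposability of $M$ applied to the appropriate contiguous subset of vertices, should verify that a proper extension is always available. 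Since each pin strictly enlarges the shadow and there are only $2n$ vertices, the process terminates in at most $2n-|S_1|$ steps with $2n\in S_j$, and truncation then produces the proper right-reaching pin sequence.
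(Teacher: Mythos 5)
There is a genuine gap: the entire difficulty of the lemma is concentrated in the step you explicitly leave open, namely that a \emph{proper} extension is always available while still steering the shadow toward $2n$. Your own discussion shows why this is the crux --- ``farthest endpoint'' can run away from $2n$, and ``push toward $2n$'' can strand you in an $N_i$ with no outgoing edges --- but the proposal ends with ``a case analysis \dots should verify,'' which is precisely the part that needs a proof. Moreover, the one concrete argument you do give is not quite right: extremality of $p_i$ on, say, the right side of $S_{i-1}$ rules out splitting edges of $S_i$ whose inside endpoint is in $S_{i-1}$ and whose outside endpoint is to the \emph{right} of $S_i$, but it does not rule out left-pins of $S_{i-1}$, which also split $S_i$ and have inside endpoint in $S_{i-1}$. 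So the claim ``every edge that splits $S_i$ has its inside endpoint in $N_i$'' fails, and with it the assertion that \emph{any} splitting edge can serve as a proper $p_{i+1}$; you would still need to show that at least one splitting edge rooted in $N_i$ exists, which is exactly the unresolved case analysis.

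The paper avoids this trap entirely by working in two phases rather than one greedy forward pass. First it builds an arbitrary (not necessarily proper) right-reaching pin sequence $p_1,\dots,p_m$: since $S_i\subsetneq[2n]$ is never an interval, some edge always splits it, the shadow grows strictly, and $2n$ can only enter the shadow as the endpoint of a pin. Then it extracts a proper subsequence $q_1,q_2,\dots$ by setting $q_1=p_1$ and letting $q_i$ be the pin $p_j$ of \emph{greatest index} crossing $q_{i-1}$; maximality of the index forces $q_i$ not to cross $q_1,\dots,q_{i-2}$ (giving properness), and the indices increase until the pin incident with $2n$ is reached (giving right-reachingness). If you want to salvage a one-pass construction you would have to carry out your hybrid case analysis in full; the two-phase extraction is the cleaner route.
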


\begin{proof}%lemma RRPPS
Let $p_1$ be an arbitrary edge of the indecomposable matching $M$.  If the vertex $2n$ is incident with $p_1$, then we are done.  Otherwise, by the indecomposability of $M$, there is an edge which crosses $p_1$; label this edge $p_2$.  If $2n$ is incident with $p_2$, then we are done.  Otherwise, the edges $p_1$ and $p_2$ define a new shadow, which is still not an interval, so there is an edge, $p_3$, which splits this shadow.  Since the only interval is $[2n]$, by repeating this process, we can create a pin sequence $p_1,\dots,p_m$ such that $2n$ is incident with $p_m$.

We now construct from this right-reaching pin sequence a proper right-reaching pin sequence $q_1,q_2,\dots,q_s$.  First, set $q_1=p_1$.  Then we successively extend this sequence by choosing $q_i$ to be the pin $p_j$ of the greatest index which crosses $q_{i-1}$.  We stop when $q_i$ is incident with $2n$.  Note that by this selection procedure, $q_i$ crosses $q_{i-1}$ but does not cross $q_1,\dots,q_{i-2}$.  Therefore the resulting sequence $q_1,\dots$ is a proper right-reaching pin sequence, as desired.
\end{proof}

In the proof of Theorem~\ref{thm-ind-match-ramsey}, we use Lemma~\ref{RRPPS} to show that every indecomposable matching with $n$ edges contains at least $n$ distinct right-reaching proper pin sequences.

We can now derive the main result.

\newenvironment{proof-main}{\medskip\noindent\emph{Proof of Theorem~\ref{thm-ind-match-ramsey}. }}
{\qed\bigskip}
\begin{proof-main}
Let $M$ be a matching which does not contain a broken nesting, interleaving, or proper pin sequence with at least $k$ edges.  We construct a tree of all the proper right-reaching pin sequences of $M$ in the following manner.  The parent of the pin sequence $p_1,\dots,p_m$ ($m\ge 2$) is the sequence $p_2,\dots,p_m$, so the root of this tree is the edge (thought of as a pin sequence) incident with the vertex of the greatest label.

Since $M$ does not have a pin sequence of length $k$, this tree has height at most $k-1$.  Because $M$ does not contain an interleaving or broken nesting of with $k$ edges, Lemma~\ref{conval} implies that no node may have $2(k-1)^2+2$ children.  This bounds the size of the tree with the sum
$$
\sum_{i=0}^{k-1} (2(k-1)^2+1)^i
=
\frac{(2k^2-4k+3)^k-1}{2(k-1)^2}
<
\sum_{i=0}^{k-1} (2k^2)^i
=
(2k^2)^{2k}
$$ 
By Lemma~\ref{RRPPS}, every edge of $M$ begins a proper right-reaching pin sequence.  Therefore $M$ can have at most $(2k^2)^{2k}$ edges, proving the theorem.
\end{proof-main}

\minisec{Acknowledgments}
I would like to thank Professor Vince Vatter for guiding me in this research.

%Bibliography
\def\cprime{$'$}

%\bibliographystyle{acm}
%\bibliography{../refs}

\end{document}